\newtheorem{theorem}{Theorem}[section]
\newtheorem{lemma}[theorem]{Lemma}
\theoremstyle{definition}
\newtheorem*{entry}{\textbf{Entry (K.2)}}
\numberwithin{equation}{section}
\let\eps\varepsilon
\begin{document}

\title[On the (K.2) supercongruence of Van Hamme]
{On the (K.2) supercongruence of Van Hamme}

\author{Robert Osburn}
\address{School of Mathematical Sciences, University College Dublin, Belfield, Dublin 4, Ireland}
\email{robert.osburn@ucd.ie}

\author{Wadim Zudilin}
\address{School of Mathematical and Physical Sciences, The University of Newcastle, Callaghan, NSW 2308, Australia}
\email{wadim.zudilin@newcastle.edu.au}

\address{Max-Planck-Institut f{\"u}r Mathematik, Vivatsgasse 7, D-53111, Bonn, Germany}

\date{\today}

\subjclass[2010]{Primary 11B65; Secondary 33C20, 33F10}
\keywords{Supercongruence, Ramanujan, Wilf--Zeilberger pair}

\begin{abstract}
We prove the last remaining case of the original 13 Ramanujan-type supercongruence conjectures due to Van Hamme from 1997.
The proof utilizes classical congruences and a WZ pair due to Guillera. Additionally, we mention some future directions concerning this type of supercongruence.
\end{abstract}

\maketitle

\section{Introduction}

In his second notebook, Ramanujan recorded the following formula for $1/\pi$ (see \cite[p.~352]{berndt}):
\begin{equation} \label{16overpi}
\sum_{n=0}^{\infty} \frac{(\frac{1}{2})_{n}^3}{n!^3} (42n+5) \frac{1}{64^n} = \frac{16}{\pi},
\end{equation}
which he later reproduced in \cite{rama14} together with other similar instances that would revolutionize the history of computing $\pi$ in the 1980's.
Here and throughout, we use the Pochhammer symbol
$(a)_{n} := \Gamma(a+n)/\Gamma(a)$ for the quotient of two gamma functions, so that
$(a)_{0}=1$ and
$(a)_{n} = a (a+1)(a+2) \dotsb (a+n-1)$ if $n$ is a positive integer.

Curiously, a proof for \eqref{16overpi} was not discovered until 1987 \cite{borweins}.
In 1997, Van Hamme conjectured a $p$-adic analogue of \eqref{16overpi}, namely:

\begin{entry}[{Van Hamme \cite{vh}}]
Let $p$ be an odd prime. Then
\begin{equation*}
\sum_{n=0}^{\frac{p-1}{2}} \frac{(\frac{1}{2})_{n}^3}{n!^3} (42n+5) \frac{1}{64^n} \equiv 5p (-1)^{\frac{p-1}{2}} \pmod{p^4}.
\end{equation*}
\end{entry}

Entry (K.2) is one of 13 Ramanujan-type supercongruence conjectures originally due to Van Hamme \cite{vh}.
The other 12 have now been proven using a variety of techniques.
For example, Van Hamme \cite{vh} himself used properties of certain orthogonal polynomials to prove cases (C.2), (H.2) and (I.2).
Kilbourn \cite{kilbourn} applied Greene's hypergeometric series \cite{greene} in order to settle case (M.2),
while McCarthy and the first author \cite{mo} combined this approach with Whipple's transformation to prove (A.2).
Mortenson \cite{mort} then used a similar argument to deal with (B.2).
The second author \cite{zudilin} adopted the method of Wilf--Zeilberger (WZ) pairs to not only give another proof of (B.2),
but demonstrate several new Ramanujan-type supercongruences.
Long \cite{long} utilized a combination of combinatorial identities, $p$-adic analysis and transformations together with ``strange'' evaluations
of ordinary hypergeometric series due to Gosper, Gessel and Stanton to give yet another proof of (B.2) and prove (J.2).
Recently, this strategy has been successfully executed by Swisher \cite{swisher} to also handle cases (E.2), (F.2), (G.2) and (L.2). Finally, Long and Ramakrishna \cite{longrama} showed (D.2) using a ``pertubed'' $_7F_6$ hypergeometric series and Dougall's formula. The purpose of this paper is to prove the last remaining case of Van Hamme's conjectures:

\begin{theorem} \label{main}
Van Hamme's supercongruence \textup{(K.2)} is true.
\end{theorem}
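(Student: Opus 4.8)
The plan is to follow the Wilf--Zeilberger (WZ) strategy pioneered by the second author in \cite{zudilin}, the input being a suitable WZ pair of Guillera attached to Ramanujan's formula \eqref{16overpi}. Recall that such a pair consists of hypergeometric terms $F(n,k)$ and $G(n,k)$ satisfying the WZ equation
\begin{equation*}
F(n,k+1)-F(n,k)=G(n+1,k)-G(n,k),
\end{equation*}
normalized so that the slice $F(n,0)$ recovers the summand of \eqref{16overpi} up to a harmless constant, namely $F(n,0)=\frac{(\frac{1}{2})_n^3}{n!^3}(42n+5)\,64^{-n}$. The first task is to write this pair down explicitly and to confirm the WZ relation through its rational certificate, a purely mechanical verification.

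The second task is a double telescoping. Abbreviating $M=\frac{p-1}{2}$ and summing the WZ equation over $0\le n\le M$ collapses the right-hand side to the endpoint difference $G(M+1,k)-G(0,k)$, so that $S(k):=\sum_{n=0}^{M}F(n,k)$ obeys $S(k+1)-S(k)=G(M+1,k)-G(0,k)$. Summing now over $k\ge 0$ and using $F(n,k)\to 0$ as $k\to\infty$, I obtain the key reduction
\begin{equation*}
\sum_{n=0}^{\frac{p-1}{2}}\frac{(\frac{1}{2})_n^3}{n!^3}(42n+5)\,64^{-n}
=\sum_{k=0}^{\infty}\Bigl[G(0,k)-G\bigl(\tfrac{p+1}{2},k\bigr)\Bigr],
\end{equation*}
which converts the truncated Ramanujan sum into a single sum over $k$ of values of $G$ at the two endpoints $n=0$ and $n=\frac{p+1}{2}$.

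The final and decisive task is to evaluate the right-hand side modulo $p^4$ by means of classical congruences. The endpoint $n=\frac{p+1}{2}$ forces the appearance of the Pochhammer factor $(\frac{1}{2})_{(p+1)/2}$, whose last factor is $\frac{p}{2}$; tracking the resulting $p$-adic valuation of $G(\frac{p+1}{2},k)$ across $k$ should show that this family contributes trivially modulo $p^4$ apart from a few small indices, which I would then treat by hand. The main term is therefore $\sum_{k\ge 0}G(0,k)$, to be expanded modulo $p^4$ using Wolstenholme-type congruences, congruences for central binomial coefficients and for harmonic sums $\sum 1/(2j-1)$, and the identity $(-1)^{(p-1)/2}=\bigl(\frac{-1}{p}\bigr)$, after which one matches the outcome with the target $5p(-1)^{(p-1)/2}$.

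The hard part will be the precision demanded in this last step. Because the congruence is claimed to the fourth power of $p$ while the right-hand side has $p$-adic valuation only one, every cancellation in $\sum_k G(0,k)$ must be controlled three orders beyond its leading term, so the relevant harmonic and binomial congruences have to be pushed to their $p^3$ refinements; equally delicate is verifying that the $n=\frac{p+1}{2}$ contributions really vanish modulo $p^4$ and not merely modulo $p^3$, since the prefactor $42n+5$ together with the denominators in $G$ can lower the expected valuation by one.
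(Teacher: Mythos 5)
Your overall architecture is the paper's: Guillera's WZ pair normalized so that $F(n,0)=(42n+5)\,(\tfrac12)_n^3/(64^n\,n!^3)$, a double telescoping over $0\le n\le\frac{p-1}{2}$ and over $k$, and then classical congruences. The telescoping itself is sound: because $(1)_{2n-k+1}$ sits in the denominator of $F$, one has $F(n,k)=0$ as soon as $k>2n+1$, so your infinite $k$-sum is really a finite sum and no convergence issue arises (note, though, that Guillera's pair satisfies $F(n,k-1)-F(n,k)=G(n+1,k)-G(n,k)$, not the $k+1$ convention you wrote, which flips some signs). The first genuine problem is that your evaluation plan has the two endpoints backwards. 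For Guillera's pair, $G(n,k)$ carries the factor $64n^2$ and has $(1)_{2n-k}$ in its denominator, so $G(0,k)=0$ identically: the ``main term'' $\sum_{k}G(0,k)$ you intend to expand via Wolstenholme and central-binomial congruences vanishes, and the entire answer must come from the endpoint $n=\frac{p+1}{2}$. Concretely, the two terms that produce $5p(-1)^{(p-1)/2}$ are: the boundary term $F\bigl(\frac{p-1}{2},p\bigr)$ (equivalently, the $k=p+1$ term $G\bigl(\frac{p+1}{2},p+1\bigr)$ of your sum), which is $\equiv 6p(-1)^{(p-1)/2}\pmod{p^4}$ by Morley's congruence \eqref{p3} combined with the Wolstenholme-type Lemma~\ref{key1}; and the term $G\bigl(\frac{p+1}{2},1\bigr)\equiv -p(-1)^{(p-1)/2}\pmod{p^4}$, again via Morley. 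These combine as $6p-p=5p$ times $(-1)^{(p-1)/2}$.

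The second, more serious gap is your claim that tracking the $p$-adic valuation of $G\bigl(\frac{p+1}{2},k\bigr)$ across $k$ shows these terms ``contribute trivially modulo $p^4$ apart from a few small indices.'' That is false term by term: for every $2\le k\le p$ the term $G\bigl(\frac{p+1}{2},k\bigr)$ has $p$-adic valuation exactly $2$ in general (one $p$ from the explicit prefactor, one from the factor $\frac p2$ inside $(\tfrac12)_{\frac{p-1}{2}+k}$), so no valuation count, and no hand-treatment of finitely many indices, can reach $p^4$. What closes the proof is a collective cancellation across all of $k=2,\dots,p$, namely the congruence \eqref{key2}: the sum $\sum_{k=2}^{p}(-1)^k(\tfrac12)_{\frac{p-1}{2}+k}(\tfrac12)_{\frac{p+1}{2}-k}^2/(1)_{p+1-k}$ vanishes modulo $p^3$, which together with the prefactor of order $p$ gives the needed $p^4$. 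Proving this requires rewriting the sum as a $p$-adic unit times $p\sum_{n=1}^{p-1}(1-\tfrac p2)_{n-1}^2/\bigl(n!\,(1-\tfrac{3p}{2})_{n-1}\bigr)$, expanding the Pochhammer symbols in powers of $p$, and invoking $H_{p-1}^{(1)}\equiv0\pmod{p^2}$, the congruences \eqref{wellknown}, and the identity $2\sum_{n=1}^{p-1}H_{n-1}^{(1)}/n=\bigl(H_{p-1}^{(1)}\bigr)^2-H_{p-1}^{(2)}$. This cancellation lemma is the missing idea in your proposal; without it the argument cannot be completed, and with it (plus the two endpoint evaluations above and a separate check at $p=3$) your outline becomes the paper's proof.
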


We prove Theorem \ref{main} in Section~\ref{s2} using classical congruences of Wolstenholme and Morley and a WZ pair due to Guillera.
In Section~\ref{s3}, we make some remarks concerning future study.

\section{Proof of Theorem \ref{main}}
\label{s2}

We first require two preliminary results.

\begin{lemma}
\label{key1}
Let $p > 3$ be prime. Then
\begin{equation*}
\prod_{k=1}^{p-1} (p+2k) \equiv (-1)^{\frac{p-1}{2}} \prod_{k=1}^{\frac{p-1}{2}} (2k-1)^2 \pmod{p^3}.
\end{equation*}
\end{lemma}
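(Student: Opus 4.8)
The plan is to strip the shift $p$ out of each factor on the left, reducing the product to a power of two times a factorial, and then to match this reduced expression to the right-hand side via Morley's congruence; Wolstenholme's theorem will be used to kill the error terms introduced in the first step. Both ingredients are exactly the ``classical congruences of Wolstenholme and Morley'' promised in the introduction, and both require $p>3$, which is the hypothesis of the lemma.

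First I would write each factor as $p+2k = 2k\bigl(1+\tfrac{p}{2k}\bigr)$, which is legitimate since $2k$ is a $p$-adic unit for $1\le k\le p-1$ (here $p$ is odd and $k<p$). This factors the left-hand side as
\[
\prod_{k=1}^{p-1}(p+2k) = 2^{p-1}(p-1)!\prod_{k=1}^{p-1}\Bigl(1+\frac{p}{2k}\Bigr).
\]
Expanding the last product in the elementary symmetric functions $e_m$ of the units $\{1/(2k)\}$ gives $\sum_{m\ge 0}p^m e_m$. Modulo $p^3$ only $m=0,1,2$ can contribute, because every $e_m$ is $p$-integral. Now Wolstenholme's congruences take over: $p\,e_1=\tfrac{p}{2}\sum_k 1/k=\tfrac{p}{2}H_{p-1}\equiv 0\pmod{p^3}$ since $H_{p-1}\equiv 0\pmod{p^2}$, while $e_2\equiv -\tfrac18\sum_k 1/k^2\equiv 0\pmod p$ (using $H_{p-1}^2\equiv 0\pmod{p^4}$ and $\sum_k 1/k^2\equiv 0\pmod p$), so $p^2 e_2\equiv 0\pmod{p^3}$. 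Hence the correction product is $\equiv 1\pmod{p^3}$, leaving $\prod_{k=1}^{p-1}(p+2k)\equiv 2^{p-1}(p-1)!\pmod{p^3}$.

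Next I would recast the right-hand side. Writing $O:=\prod_{k=1}^{(p-1)/2}(2k-1)$ for the product of the odd residues below $p$ and $E:=\prod_{k=1}^{(p-1)/2}(2k)=2^{(p-1)/2}\bigl(\tfrac{p-1}{2}\bigr)!$ for the even ones, the factorial splits as $(p-1)!=OE$, whence
\[
O^2 = \frac{\bigl((p-1)!\bigr)^2}{2^{p-1}\,((p-1)/2)!^{\,2}} = \frac{(p-1)!}{2^{p-1}}\binom{p-1}{(p-1)/2}.
\]
Multiplying Morley's congruence $(-1)^{(p-1)/2}\binom{p-1}{(p-1)/2}\equiv 4^{p-1}\pmod{p^3}$ by the $p$-adic unit $(p-1)!/2^{p-1}$ then gives $(-1)^{(p-1)/2}O^2\equiv 2^{p-1}(p-1)!\pmod{p^3}$, which is precisely the reduced left-hand side. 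Comparing the two finishes the proof.

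I expect the main obstacle to be the first step: one must verify that the multiplicative correction $\prod(1+p/(2k))$ really collapses to $1$ modulo $p^3$, and this needs both halves of Wolstenholme's theorem at once (the $p^2$-vanishing of $H_{p-1}$ for the linear term and the $p$-vanishing of $\sum 1/k^2$ for the quadratic term), together with the bookkeeping that all higher symmetric functions are $p$-integral so that their contributions carry a factor $p^m$ with $m\ge 3$. Once this is settled, the only remaining insight is to recognize the leftover $2^{p-1}(p-1)!$ as a disguised central binomial coefficient, which is exactly what unlocks Morley's congruence.
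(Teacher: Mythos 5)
Your proof is correct, but it follows a genuinely different route from the paper's. The paper exploits the symmetry of the factors $p+2, p+4, \dots, 3p-2$ about their midpoint $2p$: pairing them as $(2p+(2k-1))(2p-(2k-1))$ rewrites the left-hand side as $(-1)^{(p-1)/2}\prod_{k=1}^{(p-1)/2}\bigl((2k-1)^2-(2p)^2\bigr)$, so that only \emph{even} powers of $p$ ever occur, and the congruence mod $p^3$ then needs just one fact: $\frac{1}{1^2}+\frac{1}{3^2}+\dots+\frac{1}{(p-2)^2}=H_{p-1}^{(2)}-\frac14 H_{(p-1)/2}^{(2)}\equiv 0\pmod p$. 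In particular, the paper's proof of this lemma never touches Morley's congruence, nor the mod-$p^2$ statement $H_{p-1}^{(1)}\equiv 0$ (both are reserved for use elsewhere in the paper). Your factorization $p+2k=2k\bigl(1+\frac{p}{2k}\bigr)$ gives up that symmetry, which is exactly why you must kill a genuine linear term using the full Wolstenholme congruence $H_{p-1}^{(1)}\equiv 0\pmod{p^2}$ and then invoke Morley's congruence to convert $2^{p-1}(p-1)!$ back into $(-1)^{(p-1)/2}\prod_{k=1}^{(p-1)/2}(2k-1)^2$. So your argument uses strictly heavier ingredients; what it buys is the explicit observation that, modulo Wolstenholme-type congruences, the lemma is \emph{equivalent} to Morley's congruence: concatenating the paper's pairing argument with your reduction, run in reverse, yields a proof of Morley's congruence from Wolstenholme's theorem. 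That is a pleasant dividend of your approach, even though, as a proof of the lemma itself, the paper's is shorter and self-contained.
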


\begin{proof}
We have
\begin{align*}
\prod_{k=1}^{p-1}(p+2k)
&=\prod_{k=1}^{\frac{p-1}{2}}(2p+(2k-1))(2p-(2k-1))
=(-1)^{\frac{p-1}{2}}\prod_{k=1}^{\frac{p-1}{2}}((2k-1)^2-(2p)^2) \\
&=(-1)^{\frac{p-1}{2}}\prod_{k=1}^{\frac{p-1}{2}}(2k-1)^2\cdot\biggl( 1-(2p)^2 \biggl(\frac1{1^2}+\frac1{3^2}+\dots+\frac1{(p-2)^2} \biggr)+O(p^4)\biggr)
\\
&\equiv(-1)^{\frac{p-1}{2}}\prod_{k=1}^{\frac{p-1}{2}}(2k-1)^2\pmod{p^3}.
\end{align*}
Here we have used that
\begin{equation*}
\frac1{1^2}+\frac1{3^2}+\dots+\frac1{(p-2)^2} = H_{p-1}^{(2)} - \frac{1}{4} H_{\frac{p-1}{2}}^{(2)}
\end{equation*}
and for primes $p>3$ \cite{wol}
\begin{equation}
\label{wellknown}
H_{p-1}^{(2)} \equiv H_{\frac{p-1}{2}}^{(2)} \equiv 0 \pmod{p},
\end{equation}
where $H_{n}^{(i)} := \sum_{j=1}^{n} j^{-i}$ are the generalized harmonic numbers.
\end{proof}

\begin{lemma}
Let $p$ be an odd prime. Then
\begin{equation} \label{key2}
\sum_{k=2}^{p} (-1)^k \frac{(\frac{1}{2})_{\frac{p-1}{2} + k} (\frac{1}{2})_{\frac{p+1}{2} - k}^2}{(1)_{p+1-k}} \equiv 0 \pmod{p^3}.
\end{equation}
\end{lemma}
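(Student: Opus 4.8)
The plan is to show that every summand carries precisely one factor of $p$, so that after dividing through by $p$ it remains to prove that the resulting sum of $p$-adic units vanishes modulo $p^2$. Writing $m=\frac{p-1}{2}$ and denoting the $k$-th summand by $T_k$, I first record the divisibility of the three constituents. Since $(\tfrac12)_{m+k}=2^{-(m+k)}\cdot 1\cdot 3\cdots(2m+2k-1)$ and $2m+1=p$, the range $2\le k\le p$ forces exactly one factor $p$ (the factor $2m+1=p$ itself, the next multiple $3p$ being out of range), whereas $(\tfrac12)_{m+1-k}$ and $(1)_{p+1-k}=(p+1-k)!$ are $p$-adic units; for $k>m+1$ one uses $(\tfrac12)_{-j}=(-1)^j/(\tfrac12)_j$ with $0\le j\le m$ to see that $(\tfrac12)_{m+1-k}$ stays a unit. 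Hence $T_k=p\,V_k$ with each $V_k$ a $p$-adic unit, and \eqref{key2} is equivalent to $\sum_{k=2}^{p}V_k\equiv 0\pmod{p^2}$.

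To exploit the cancellation I would work with the term ratio
\begin{equation*}
\frac{V_{k+1}}{V_k}=\frac{T_{k+1}}{T_k}=-\frac{2(p+2k)(p+1-k)}{(p-2k)^2},
\end{equation*}
a rational function of $k$ that is a $p$-adic unit for $2\le k\le p-1$. Expanding it to first order in $p$ gives $\frac{V_{k+1}}{V_k}\equiv\frac{k-1}{k}\bigl(1+p\,x_k\bigr)\pmod{p^2}$ for an explicit $x_k\in\mathbb{Z}_{(p)}$, so the leading factors telescope, $\prod_{j=2}^{k-1}\frac{j-1}{j}=\frac{1}{k-1}$, and therefore
\begin{equation*}
V_k\equiv\frac{V_2}{k-1}\Bigl(1+p\sum_{j=2}^{k-1}x_j\Bigr)\pmod{p^2}.
\end{equation*}
Summing, the leading part contributes $V_2\sum_{k=2}^{p}\frac{1}{k-1}=V_2\,H_{p-1}$, which is $\equiv 0\pmod{p^2}$ by Wolstenholme. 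Thus the claim reduces to showing that the first-order correction $\sum_{k=2}^{p}\frac{1}{k-1}\sum_{j=2}^{k-1}x_j$ vanishes modulo $p$.

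The main obstacle is this last congruence, and it is where the harmonic-number machinery enters. Carrying out the partial-fraction decomposition of $x_j$ turns the inner sum $\sum_{j=2}^{k-1}x_j$ into a combination of $H_{k-1}$, $\frac{1}{k-1}$ and a constant, after which $\sum_{k=2}^{p}\frac{1}{k-1}\sum_{j}x_j$ collapses to a fixed linear combination of $H_{p-1}$, $H_{p-1}^{(2)}$ and $\sum_{i=1}^{p-1}\frac{H_i}{i}$. The first two are $\equiv 0\pmod p$ by \eqref{wellknown} and Wolstenholme, and for the third I would invoke the elementary identity $\sum_{i=1}^{p-1}\frac{H_i}{i}=\frac12\bigl(H_{p-1}^2+H_{p-1}^{(2)}\bigr)$, which is again $\equiv 0\pmod p$. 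Combining these yields $\sum_{k=2}^{p}V_k\equiv 0\pmod{p^2}$ and hence \eqref{key2}. The only delicate points are the endpoint $k=p$, where $p-2k$ acquires a factor of $p$ so that one checks separately that $V_p$ is still a unit and that the telescoping product up to $j=p-1$ involves only units, and keeping the $p$-adic precision honest throughout the expansion.
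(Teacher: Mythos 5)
Your argument is essentially correct for $p\ge 5$, and the details you left implicit do work out: the term ratio is indeed $-\frac{2(p+2k)(p+1-k)}{(p-2k)^2}$, its first-order expansion yields $x_k=\frac{k-3}{2k(k-1)}=\frac{3}{2k}-\frac{1}{k-1}$, the inner sum becomes $\frac12 H_{k-1}^{(1)}+\frac{1}{k-1}-\frac32$, and the outer sum collapses to $\frac12\sum_{i=1}^{p-1}\frac{H_i^{(1)}}{i}+H_{p-1}^{(2)}-\frac32 H_{p-1}^{(1)}$, which vanishes modulo $p$ by the congruences you cite. Your route differs from the paper's in the middle mechanism. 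The paper reindexes the sum by $n=p+1-k$, which turns the summand (up to a unit prefactor) into $\frac{(-p/2)_n^2}{n!\,(-3p/2)_n}$; the factor of $p$ then comes out in one stroke as $(-p/2)^2/(-3p/2)=-p/6$, and the first-order expansion is read off from the single formula $(1+\eps)_k=k!\bigl(1+\eps H_k^{(1)}+O(\eps^2)\bigr)$ rather than from term ratios and telescoped products of congruences. Both routes then terminate in exactly the same three ingredients: Wolstenholme's $H_{p-1}^{(1)}\equiv 0\pmod{p^2}$, the congruence \eqref{wellknown}, and the symmetric-sum identity (your $\sum_{i=1}^{p-1}H_i^{(1)}/i=\frac12\bigl((H_{p-1}^{(1)})^2+H_{p-1}^{(2)}\bigr)$ is the same identity the paper uses with $H_{n-1}^{(1)}$ in place of $H_n^{(1)}$). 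What the paper's reindexing buys is lighter bookkeeping: no unit-tracking for negative-index Pochhammer symbols, no endpoint caution at $k=p$, and no need to multiply $O(p-2)$ congruences of units together. What your version buys is that it works directly on the sum as given, with the telescoping doing the work that the change of variable does in the paper; the price is precisely the delicacies you flag, all of which are manageable since the ratio is only ever used for $2\le k\le p-1$, where $k$, $k-1$, $p+1-k$ and $p-2k$ are all units.

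The one genuine omission is $p=3$. The lemma is asserted for all odd primes, but your proof invokes $H_{p-1}^{(1)}\equiv 0\pmod{p^2}$ and $H_{p-1}^{(2)}\equiv 0\pmod p$, both of which fail at $p=3$ (for instance $H_2^{(1)}=\frac32\not\equiv 0\pmod 9$), so the argument as written only covers $p\ge 5$. The repair is the same one-line patch the paper makes at the very start of its proof: verify $p=3$ directly, where the sum equals $\frac{15}{16}-\frac{105}{4}=-\frac{405}{16}$ and $405=3^4\cdot 5$, so the congruence holds (even modulo $p^4$). You should add this check; without it the stated lemma is not fully proved.
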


\begin{proof}
One can directly confirm the desired congruence for $p=3$. Summing over $n=p+1-k$ instead we can write
\begin{align*}
\sum_{k=2}^{p} (-1)^k \frac{(\frac{1}{2})_{\frac{p-1}{2} + k} (\frac{1}{2})_{\frac{p+1}{2} - k}^2 }{ (1)_{p+1-k} }
&=\frac{(1+ \frac{p}{2})_p}{(\frac{1}{2})_{\frac{p+1}{2}}} \sum_{n=1}^{p-1}\frac{(\frac{-p}{2})_n^2}{n!(\frac{-3p}{2})_n}
\\
&=\frac{(1+\frac{p}{2})_p}{(\frac{1}{2})_{\frac{p+1}{2}}}\cdot\frac{(\frac{-p}{2})^2}{\frac{-3p}{2}} \sum_{n=1}^{p-1}\frac{(1-\frac{p}{2})_{n-1}^2}{n!(1-\frac{3p}{2})_{n-1}}
\\
&=-\frac{(1+\frac{p}{2})_p}{(\frac{1}{2})_{\frac{p+1}{2}}}\cdot\frac{p}{6} \sum_{n=1}^{p-1}\frac{(1-\frac{p}{2})_{n-1}^2}{n!(1-\frac{3p}{2})_{n-1}}.
\end{align*}
Thus, it suffices to show
\begin{equation}
\label{easier}
\sum_{n=1}^{p-1}\frac{(1-\frac{p}{2})_{n-1}^2}{n!(1-\frac{3p}{2})_{n-1}}\equiv0\pmod {p^2}
\end{equation}
for $p>3$. Using
$$
(1+\eps)_k=(1+\eps)(2+\eps)\dotsb(k+\eps) =k!\bigl(1+\eps H_k^{(1)} +O(\eps^2)\bigr),
$$
we have
$$
\frac{(1-\frac{p}{2})_{n-1}^2}{n!(1-\frac{3p}{2})_{n-1}}
=\frac{(n-1)!^2\bigl(1- \frac{p}{2} H_{n-1}^{(1)} + O(p^2)\bigr)^2}{n!\,(n-1)!\,\bigl(1- \frac{3p}{2} H_{n-1}^{(1)} + O(p^2)\bigr)}
=\frac1n\,\Bigl( 1+\frac{p}{2} H_{n-1}^{(1)} + O(p^2) \Bigr)
$$
for $n=1,\dots,p-1$. We thus obtain
$$
\sum_{n=1}^{p-1}\frac{(1- \frac{p}{2})_{n-1}^2}{n!(1-\frac{3p}{2})_{n-1}}
\equiv H_{p-1}^{(1)} + \frac{p}{2} \sum_{n=1}^{p-1} \frac{H_{n-1}^{(1)}}{n} \pmod{p^2}.
$$
It remains to use \cite{wol}
$H_{p-1}^{(1)} \equiv 0\pmod{p^2}$ and \eqref{wellknown} for $p>3$ and
$$
2\sum_{n=1}^{p-1} \frac{H_{n-1}^{(1)}}{n} = 2\sum_{1\le k<n\le p-1} \frac1{kn} = \bigl( H_{p-1}^{(1)} \bigr)^2 - H_{p-1}^{(2)}.
$$
This establishes \eqref{easier} and thus \eqref{key2} for $p>3$.
\end{proof}

We can now prove our main result.

\begin{proof}[Proof of Theorem \textup{\ref{main}}]
Recall that a pair of rational functions $F(n,k)$ and $G(n,k)$ form a WZ pair if they satisfy
\begin{equation} \label{wz}
F(n,k-1) - F(n,k) = G(n+1, k) - G(n,k).
\end{equation}
The functions (see \cite{guillera} or \cite{zudilin})
\begin{equation*}
F(n,k)=(84n^2 - 56nk + 4k^2 + 52n - 12k +5) \frac{(-1)^k  (\frac{1}{2})_{n}  (\frac{1}{2})_{n+k} (\frac{1}{2})_{n-k}^2}{2^{4n} (1)_{n}^2  (1)_{2n-k+1}}
\end{equation*}
and
\begin{equation*}
G(n,k)=64n^2 \frac{(-1)^k  (\frac{1}{2})_{n} (\frac{1}{2})_{n+k-1}  (\frac{1}{2})_{n-k}^2}{2^{4n} (1)_{n}^2 (1)_{2n-k}}
\end{equation*}
satisfy \eqref{wz} since after division of both sides by $G(n,k)$, one only needs to check that
\begin{align*}
& -\frac{(84n^2 - 56kn + 108n + 4k^2 - 20k + 21) \bigl(\frac{1}{2} + n-k\bigr)^2}{64n^2 (2n-k+2)(2n-k+1)} \\
&\quad\qquad - \frac{(84n^2 - 56nk + 4k^2 + 52n - 12k + 5) \bigl(n+k - \frac{1}{2}\bigr) \bigl(2n-k+2)}{64n^2 (2n-k+1)(2n-k+2)} \\
&\quad = \frac{\bigl(\frac{1}{2} + n \bigr) \bigl(n+k - \frac{1}{2} \bigr) \bigl(\frac{1}{2} + n-k \bigr)^2}{16n^2 (2n-k+2)(2n-k+1)} -1.
\end{align*}

Summing \eqref{wz} over $n=0,\dots,\frac{p-1}{2}$, we obtain (via telescoping)
\begin{equation} \label{2fequalg}
\sum_{n=0}^{\frac{p-1}{2}} F(n,k-1) - \sum_{n=0}^{\frac{p-1}{2}} F(n,k)
= G\Bigl(\frac{p+1}{2}, k\Bigr),
\end{equation}
where we have used that $G(0,k)=0$. Summing \eqref{2fequalg} over $k=1, \dots, p$, we then obtain
\begin{equation} \label{tele}
\sum_{n=0}^{\frac{p-1}{2}} F(n,0)
= \sum_{n=0}^{\frac{p-1}{2}} F(n,p) + \sum_{k=1}^{p} G\Bigl(\frac{p+1}{2}, k\Bigr)
= F\Bigl(\frac{p-1}{2}, p\Bigr) +  \sum_{k=1}^{p} G\Bigl(\frac{p+1}{2}, k\Bigr),
\end{equation}
where we have used that $F(n,k)=0$ if $2n-k+1<0$ because of the presence of $(1)_{2n-k+1}$ in the denominator.
It now suffices to show
\begin{equation} \label{fclaim}
F\Bigl(\frac{p-1}{2}, p\Bigr) \equiv 6p (-1)^{\frac{p-1}{2}} \pmod{p^4}
\end{equation}
and
\begin{equation} \label{gclaim}
 \sum_{k=1}^{p} G\Bigl(\frac{p+1}{2}, k\Bigr) \equiv p (-1)^{\frac{p+1}{2}} \pmod{p^4}.
\end{equation}

We first consider \eqref{fclaim}. As
\begin{align}
\Bigl(\frac{1}{2} \Bigr)_{\frac{p-1}{2} + p} &= \Bigl(\frac{1}{2} \Bigr)_{\frac{p-1}{2}} \prod_{k=0}^{p-1} \Bigl( \frac{p}{2} + k \Bigr),
\nonumber \\
\label{p2}
\frac{(\frac{1}{2})_{\frac{p-1}{2}}}{(1)_{\frac{p-1}{2}}^2} &= \frac{\binom{p-1}{\frac{p-1}{2}}}{2^{p-2} (p-1) (1)_{\frac{p-3}{2}}}
\end{align}
and for primes $p>3$ \cite{morley}
\begin{equation} \label{p3}
\binom{p-1}{\frac{p-1}{2}} \equiv (-1)^{\frac{p-1}{2}} 2^{2p-2} \pmod{p^3},
\end{equation}
we have
\begin{align}
F\Bigl(\frac{p-1}{2}, p\Bigr)
& = 3p^2 \Biggl( \frac{(\frac{1}{2})_{\frac{p-1}{2}} (\frac{1}{2})_{\frac{p-1}{2} + p} (\frac{1}{2})_{\frac{p-1}{2} - p}^2}{2^{2(p-1)} (1)_{\frac{p-1}{2}}^2} \Biggr) \nonumber \\
& \equiv 3p^2 \Biggl( \Bigl(\frac{1}{2}\Bigr)_{\frac{p-1}{2} - p}^2 \prod_{k=0}^{p-1}  \Bigl( \frac{p}{2} + k \Bigr) \Biggr) \pmod{p^4} \nonumber \\
& \equiv 6p \displaystyle \prod_{k=1}^{p-1} (p+2k) \cdot \Biggl( \prod_{k=1}^{\frac{p-1}{2}} (2k-1)^2 \Biggr)^{-1} \pmod{p^4}.
\label{fstep2}
\end{align}
Here, we have used that
$$
(a)_{-n} = \prod_{k=1}^{n} \frac{1}{a-k}.
$$
Thus, \eqref{fclaim} follows from \eqref{fstep2} and Lemma \ref{key1}. We now use \eqref{p2} and \eqref{p3} to obtain
\begin{align}
G\Bigl(\frac{p+1}{2}, k\Bigr)
& = \frac{32p}{2^{2p+2}} (-1)^k \Biggl( \frac{(\frac{1}{2})_{\frac{p-1}{2}}}{(1)_{\frac{p-1}{2}}^2} \Biggr)
\frac{(\frac{1}{2})_{\frac{p+1}{2} + k -1} (\frac{1}{2})_{\frac{p+1}{2} - k}^2}{(1)_{p+1-k}} \nonumber \\
& \equiv \frac{p (-1)^{\frac{p-1}{2}+k}}{2^{p-2} (1)_{\frac{p-1}{2}}} \frac{(\frac{1}{2})_{\frac{p-1}{2} + k} (\frac{1}{2})_{\frac{p+1}{2} - k}^2}{(1)_{p+1-k}} \pmod{p^4}.
\label{gstep1}
\end{align}
So, \eqref{p2}, \eqref{p3} and \eqref{gstep1} imply
\begin{equation} \label{gstep2}
G\Bigl(\frac{p+1}{2}, 1 \Bigr) \equiv (-1)^{\frac{p+1}{2}} p \pmod{p^4}.
\end{equation}
Summing \eqref{gstep1} over $k=2, \dots, p$, then applying Lemma \ref{key2} and \eqref{gstep2} yields \eqref{gclaim}.
The result now follows from \eqref{tele}--\eqref{gclaim} and checking the $p=3$ case.
\end{proof}

\section{Concluding remarks}
\label{s3}

It is still not known if there exists a general framework which explains this type of supercongruence.
Such a theory is especially desirable both as it appears that all known Ramanujan-type series for $1/\pi^a$, $a \geq 1$,
have $p$-adic analogues and all of Van Hamme's original 13 conjectures have extensions.
For example, it has been recently conjectured in~\cite{swisher} that if we let $S(N)$ denote the sum in \eqref{16overpi} truncated at $N$, then
$$
S\Bigl( \frac{p^r - 1}{2} \Bigr) \equiv p (-1)^{\frac{p-1}{2}} S \Bigl(\frac{p^{r-1} - 1}{2} \Bigr) \pmod {p^{4r}}
$$
for all primes $p>2$ and integers $r \geq 1$; for a list of these conjectural extensions, please see \cite{swisher}. This pattern continues as for the conjectural evaluation \cite{zudilinwind}
\begin{equation} \label{gourevich}
\sum_{n=0}^{\infty} \frac{(\frac12)_n^7}{n!^7} (168n^3+76n^2+14n+1)\frac1{2^{6n}} = \frac{32}{\pi^3},
\end{equation}
we also expect the monstrous supercongruence
$$
\tilde S\Bigl( \frac{p^r - 1}{2} \Bigr) \equiv p^3 (-1)^{\frac{p-1}{2}} \tilde S\Bigl(\frac{p^{r-1} - 1}{2} \Bigr) \pmod {p^{8r}}
$$
to be true for primes $p>2$, $p\ne5$ (for $p=5$, replace $p^{8r}$ with $p^{8r-1}$) where $\tilde S(N)$ is the sum in \eqref{gourevich} truncated at $N$.
This is part of a general phenomenon that has to be understood.

\section*{Acknowldegements}
The first author would like to thank both the Institut des Hautes \'Etudes Scientifiques and the Max-Planck-Institut f\"ur Mathematik for their support during the preparation of this paper.
This material is based upon work supported by the National Science Foundation under Grant no.~1002477.
He also thanks Jes\'us Guillera for his helpful comments and suggestions.
The second author acknowledges the support by the Max-Planck-Institut f\"ur Mathematik.


\begin{thebibliography}{99}

\bibitem{berndt}
B. Berndt,
\emph{Ramanujan's notebooks. Part IV},
Springer-Verlag, New York, 1994.

\bibitem{borweins}
J. Borwein and P. Borwein,
\emph{Pi and the AGM: A study in analytic number theory and computational complexity},
Canadian Mathematical Society Series of Monographs and Advanced Texts. A Wiley-Interscience Publication. John Wiley \& Sons, Inc., New York, 1987.

\bibitem{greene}
J. Greene,
\emph{Hypergeometric functions over finite fields},
Trans. Amer. Math. Soc. \textbf{301} (1987), no. 1, 77--101.

\bibitem{guillera}
J. Guillera,
\emph{Generators of some Ramanujan formulas},
Ramanujan J. \textbf{11} (2006), no. 1, 41--48.

\bibitem{kilbourn}
T. Kilbourn,
\emph{An extension of the Ap\'ery number supercongruence},
Acta Arith. \textbf{123} (2006), 335--348.

\bibitem{long}
L. Long,
\emph{Hypergeometric evaluation identities and supercongruences},
Pacific J. Math. \textbf{249} (2011), no. 2, 405--418.

\bibitem{longrama}
L. Long and R. Ramakrishna,
\emph{Some supercongruences occurring in truncated hypergeometric series},
preprint available at \url{http://arxiv.org/abs/1403.5232}.

\bibitem{mo}
D. McCarthy and R. Osburn,
\emph{A $p$-adic analogue of a formula of Ramanujan},
Arch. Math. (Basel) \textbf{91} (2008), no. 6, 492--504.

\bibitem{morley}
F. Morley,
\emph{Note on the congruence $2^{4n} \equiv (-1)^n (2n)! / (n!)^2$, where $2n+1$ is a prime},
Ann. of Math. \textbf{9} (1895), 168--170.

\bibitem{mort}
E. Mortenson,
\emph{A $p$-adic supercongruence conjecture of van Hamme},
Proc. Amer. Math. Soc. \textbf{136} (2008), no. 12, 4321--4328.

\bibitem{rama14}
S. Ramanujan,
\emph{Modular equations and approximations to $\pi$},
Quart. J. Math. \textbf{45} (1914), 350--372.

\bibitem{swisher}
H. Swisher,
\emph{On the supercongruence conjectures of van Hamme}, to appear
in Res. Math. Sci.

\bibitem{vh}
L. Van Hamme,
\emph{Some conjectures concerning partial sums of generalized hypergeometric series},
in: ``$p$-adic functional analysis'' (Nijmegen, 1996), 223--236,
Lecture Notes in Pure and Appl. Math. \textbf{192}, Dekker, 1997.

\bibitem{wol}
J. Wolstenholme,
\emph{On certain properties of prime numbers},
Quart. J. Math. (Oxford) \textbf{5} (1862), 35--39.

\bibitem{zudilinwind}
W. Zudilin,
\emph{Ramanujan-type formulae for $1/\pi$: a second wind?},
in: ``Modular forms and string duality'', 179--188,
Fields Inst. Commun. \textbf{54}, Amer. Math. Soc., Providence, RI, 2008.

\bibitem{zudilin}
W. Zudilin,
\emph{Ramanujan-type supercongruences},
J. Number Theory \textbf{129} (2009), no. 8, 1848--1857.

\end{thebibliography}
\end{document}